\newtheorem{theorem}{Theorem}
\newtheorem{lemma}[theorem]{Lemma}
\newtheorem{corollary}[theorem]{Corollary}
\newtheorem{definition}[theorem]{Definition}
\def\calX{\mathcal{X}}
\def\bbF{\mathbb{F}}
\def\bbR{\mathbb{R}}
\def\bbN{\mathbb{N}}
\def\TV{\mathrm{TV}}
\def\dmu{\mathrm{d}\mu}
\def\cl{\mathrm{cl}}
\def\dx{\mathrm{d}x}
\def\bbR{\mathbb{R}}
\def\CS{\mathrm{CS}}
\def\KL{\mathrm{KL}}
\def\TV{\mathrm{TV}}
\def\supp{\mathrm{supp}}
\def\calE{\mathcal{E}}
\def\calF{\mathcal{F}}
\def\calX{\mathcal{X}}
\def\eqdef{{:=}}
\def\tr{{\mathrm{tr}}}
\def\st{{\ :\ }}
\def\inner#1#2{{\langle #1,#2\rangle}}
\title{The statistical Minkowski distances:\\ Closed-form formula for Gaussian Mixture Models}
\author{Frank Nielsen}
\affil{Sony Computer Science Laboratories, Inc.}
\affil{Tokyo, Japan}
\affil{{\small\tt Frank.Nielsen@acm.org}}
\date{}
\begin{document}
\maketitle
\begin{abstract}
The traditional Minkowski distances are induced by the corresponding Minkowski norms in real-valued vector spaces.
In this work, we propose novel statistical symmetric distances based on the Minkowski's inequality for probability densities 
belonging to Lebesgue spaces.
These statistical Minkowski distances admit closed-form formula for Gaussian mixture models when parameterized by integer exponents.
This result extends to arbitrary mixtures of exponential families with natural parameter spaces being cones: 
This includes the binomial, the multinomial, the zero-centered Laplacian, the Gaussian and the Wishart mixtures, among others.
We also derive a Minkowski's diversity index of a normalized weighted set of probability distributions from Minkowski's inequality.
\end{abstract}

\noindent {\bf Keywords}: Minkowski $\ell_p$ metrics, $L_p$ spaces, Minkowski's inequality, statistical mixtures, exponential families, multinomial theorem, statistical divergence, information radius, projective distance, scale-invariant distance, homogeneous distance.

\section{Introduction and motivation}

\subsection{Statistical distances between mixtures}

Gaussian Mixture Models (GMMs) are flexible statistical models often used in machine learning, signal processing and computer vision~\cite{JRGMM-2009,L22GMM-2011} since they can arbitrarily closely approximate any smooth density.
To measure the dissimilarity between probability distributions, one often relies on the principled information-theoretic 
Kullback-Leibler (KL) divergence~\cite{IT-2012}, commonly called the relative entropy.
However the lack of closed-form formula for the KL divergence between GMMs\footnote{When the GMMs share the same components, it is known that the KL divergence between them amount to an equivalent Bregman divergence~\cite{wmixture-2018} that is however computationally intractable because  its corresponding Bregman generator is the differential negentropy that does not admit a closed-form expression in that case.} 
has motivated various KL lower and upper bounds~\cite{KLGMM-2003,GMMHungarian-2005,GMMLSE-2016,CROT-2018} for GMMs or approximation techniques~\cite{KLGMM-2012}, and further spurred the  {\em design} of novel distances that admit closed-form formula between GMMs~\cite{CFMixture-2012}.
To give a few examples, let us cite the statistical squared Euclidean distance~\cite{L22GMM-2011,tL2-2012}, the Jensen-R\'enyi divergence~\cite{JRGMM-2009} (for the quadratic R\'enyi entropy), the Cauchy-Schwarz (CS) divergence~\cite{CS-2006,CS-2011}, and a statistical distance based on discrete optimal transport~\cite{OTWeight-2000,CROT-2018}.

A {\em distance} $D: \calX\times \calX \rightarrow \bbR$ is a non-negative real-valued function $D$ on the {\em product space} $\calX\times \calX$ such that $D(p,q)=D((p,q))=0$ iff. $p=q$.
A distance $D(p:q)$ between  $p$ and $q$ may not be symmetric:
This fact is emphasized by the ':' delimiter notation: $D(p:q)\not = D(q:p)$.
For example, the KL divergence is an oriented distance:
$\KL(p:q)\not=\KL(q:p)$. 
Two usual symmetrizations of the KL divergence are the Jeffreys' divergence and the Jensen-Shannon divergence~\cite{symdiv-2010}.
Informally speaking, a {\em divergence}\footnote{Also called a contrast function in~\cite{Eguchi-1983}.} is a {\em smooth distance}\footnote{A Riemannian distance is not smooth but a squared Riemannian distance is smooth.} that allows one to define an information-geometric structure~\cite{IG-2016}. In other words, a divergence is a smooth premetric distance~\cite{Deza-2009}.

Recently, the Cauchy-Schwarz divergence~\cite{CS-2006} has been generalized to H\"older divergences~\cite{HolderDiv-2017}.
These Cauchy and H\"older distances $D(p:q)$ are said to be {\em projective} because $D(\lambda p:\lambda' q)=D(p:q)$ for any $\lambda,\lambda'>0$. An important family of projective divergences for robust statistical inference are the $\gamma$-divergences~\cite{gammadiv-2008,PEFdiv-2016}. 
Interestingly, those projective distances
 do not require to handle normalized probability densities but only need to consider {\em positive densities} instead (handy in applications).  
The H\"older projective divergences do not admit closed-form formula for GMMs, except for the very special case of the CS divergence.
The underlying reason is that the conjugate exponents $\frac{1}{\alpha}+\frac{1}{\beta}=1$ of H\"older divergences would need to be both integers.
This constraint yields $\alpha=\beta=1$, giving the special case of the CS divergence (the other integer exponent case is in the limit when $\alpha=0$ and $\beta=\infty$).

\subsection{Minkowski distances and Lebesgue spaces}
The renown Minkowski distances are norm-induced metrics~\cite{Deza-2009} measuring distances between $d$-dimensional vectors $x,y\in\bbR^d$ defined for $\alpha\geq 1$ by:
\begin{equation}\label{eq:md}
M_\alpha(x,y)\eqdef \|x-y\|_\alpha = \left(\sum_{i=1}^d |x_i-y_i|^\alpha  \right)^{\frac{1}{\alpha}},
\end{equation}
where the Minkowski norms are given by $\|x\|_\alpha=\left(\sum_{i=1}^d |x_i|^\alpha  \right)^{\frac{1}{\alpha}}$.
The Minkowski norms can be extended to countably infinite-dimensional $\ell_\alpha$ spaces of sequences (see~\cite{HilbertSpace-2014}, p. 68).

Let $(\calX,\calF)$ be a measurable space where $\calF$ denotes the $\sigma$-algebra of $\calX$, 
and let $\mu$ be a probability measure (with $\mu(\calX)=1$) with full support $\supp(\mu)=\calX$ (where $\supp(\mu)\eqdef \cl(\{F\in \calF \st \mu(F)>0\})$ and $\cl$ denotes the set closure). 
Let $\bbF$ be the set of all real-valued measurable functions defined on $\calX$.
We define the {\em Lebesgue space}~\cite{HilbertSpace-2014} $L_\alpha(\mu)$  for $\alpha\geq 1$ as follows:
\begin{equation}
L_\alpha(\mu)  \eqdef \left\{ f\in \bbF \st  \int_\calX |f(x)|^\alpha \dmu(x) <\infty \right\}.
\end{equation}

The Minkowski distance~\cite{Minkowski-1910} of Eq.~\ref{eq:md} can be generalized to probability densities belonging to Lebesgue $L_\alpha(\mu)$ spaces, to get the {\em statistical Minkowski distance} for $\alpha\geq 1$:
\begin{equation}\label{eq:pmd}
M_\alpha(p,q)\eqdef \left(\int_\calX |p(x)-q(x)|^\alpha \dmu(x)  \right)^{\frac{1}{\alpha}}.
\end{equation}
When $\alpha=1$, we recover twice the {\em Total Variation} (TV) metric:
\begin{equation}
\TV(p,q) \eqdef \frac{1}{2} \int |p(x)-q(x)| \dmu(x)=\frac{1}{2} \|p-q\|_{L_1(\mu)}=\frac{1}{2}M_1(p,q).
\end{equation} 
Notice that the statistical Minkowski distance does not admit closed-form formula in general because of the absolute value.
The total variation is related to the probability of error in Bayesian statistical hypothesis testing~\cite{BayesTV-2014}.

In this work, we design novel distances based on the Minkowski's inequality (triangle inequality for $L_\alpha(\mu)$, which proves that
$\|p\|_{L_\alpha(\mu)}$ is a norm (i.e., the $L_\alpha$-norm), so that the statistical Minkowski's distance between functions of a Lebesgue space can be written as
 $M_\alpha(p,q)=\|p-q\|_{L_\alpha(\mu)}$).
The space $L_\alpha(\mu)$ is a Banach space (ie., complete normed linear space).

\subsection{Paper outline}
The paper is organized as follows:
Section~\ref{sec:MIGD} defines the new Minkowski distances by measuring in various ways the tightness of the Minkowski's inequality applied to probability densities.
Section~\ref{sec:CF} proves that all these statistical Minkowski distances admit closed-form formula for mixture of exponential families with conic natural parameter spaces for integer exponents. 
In particular, this includes the case of Gaussian mixture models.
Section~\ref{sec:CEF} lists a few examples of common exponential families with conic natural parameter spaces.
In Section~\ref{sec:MPD}, we define Minkowski's diversity indices for a normalized weighted set of probability distributions.
Finally, section~\ref{sec:concl} concludes this work and hints at perspectives.

\section{Distances from the Minkowski's inequality\label{sec:MIGD}}

Let us state Minkowski's inequality:
\begin{theorem}[Minkowski's inequality]
For $p(x), q(x)\in L_\alpha(\mu)$ with $\alpha\in [1,\infty)$, we have the following Minkowski's inequality:
\begin{equation}
\left(\int |p(x)+q(x)|^\alpha \dmu(x)\right)^{\frac{1}{\alpha}} \leq 
\left(\int |p(x)|^\alpha \dmu(x)\right)^{\frac{1}{\alpha}} 
+
\left(\int |q(x)|^\alpha \dmu(x)\right)^{\frac{1}{\alpha}},
\end{equation}
with equality holding only when $q(x)=0$ (almost everywhere, a.e.), or when
 $p(x)=\lambda q(x)$ a.e. for $\lambda>0$ for $\alpha>1$.
\end{theorem}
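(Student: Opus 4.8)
The plan is to follow the classical route: prove the inequality by splitting the integrand and applying H\"older's inequality twice, then obtain the equality characterization by tracking when each intermediate inequality is tight. Two preliminaries come first. When $\alpha=1$ the statement is immediate from the pointwise triangle inequality $|p(x)+q(x)|\le |p(x)|+|q(x)|$ followed by integration, with equality iff $p(x)$ and $q(x)$ have the same sign a.e. For $\alpha>1$ I first record that $p+q\in L_\alpha(\mu)$, so the left-hand side is finite: convexity of $t\mapsto|t|^\alpha$ gives $|p+q|^\alpha\le 2^{\alpha-1}(|p|^\alpha+|q|^\alpha)$ pointwise, hence $\|p+q\|_{L_\alpha(\mu)}<\infty$.

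For the core step, fix $\alpha>1$, set $r\eqdef\|p+q\|_{L_\alpha(\mu)}$, and assume $r>0$ (otherwise the inequality is trivial). Starting from
\[
r^\alpha=\int |p+q|\,|p+q|^{\alpha-1}\dmu \ \le\ \int |p|\,|p+q|^{\alpha-1}\dmu \ +\ \int |q|\,|p+q|^{\alpha-1}\dmu,
\]
I apply H\"older's inequality to each term with conjugate exponents $\alpha$ and $\beta\eqdef\frac{\alpha}{\alpha-1}$. Since $(\alpha-1)\beta=\alpha$ and $\alpha/\beta=\alpha-1$, this gives $\int |p|\,|p+q|^{\alpha-1}\dmu\le\|p\|_{L_\alpha(\mu)}\,r^{\alpha-1}$ and likewise for $q$. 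Summing and dividing by $r^{\alpha-1}$ (legitimate because $0<r<\infty$) yields $r\le\|p\|_{L_\alpha(\mu)}+\|q\|_{L_\alpha(\mu)}$, which is Minkowski's inequality.

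For the equality characterization with $\alpha>1$ and $q\not\equiv 0$, equality forces equality in every step. Equality in the two H\"older applications forces $|p|^\alpha = a\,|p+q|^\alpha$ and $|q|^\alpha = b\,|p+q|^\alpha$ a.e. for constants $a,b\ge 0$, so on the set where $|p+q|\neq 0$ both $|p|$ and $|q|$ are proportional to $|p+q|$, whence $|p|=\lambda|q|$ a.e. with $\lambda=(a/b)^{1/\alpha}\ge 0$; and equality in the pointwise step $|p+q|=|p|+|q|$ forces $p$ and $q$ to share the same sign wherever both are nonzero. Combining, $p=\lambda q$ a.e. with $\lambda\ge 0$, and since $q\not\equiv 0$ we recover the stated form (with $p=0$ a.e. on $\{q=0\}$).

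I expect the equality analysis, not the inequality, to be the delicate part: one must check that the proportionality constants delivered by the two H\"older equalities are mutually consistent, that the sign condition from the pointwise step is compatible with them, and that the null sets $\{p=0\}$, $\{q=0\}$, $\{p+q=0\}$ are handled so that ``proportional'' is stated cleanly. (The cleanest symmetric formulation is that $p$ and $q$ are \emph{non-negatively proportional}, i.e.\ $ap=bq$ a.e.\ for some $a,b\ge 0$ not both zero; the version in the statement follows from this up to relabeling.) This argument also invokes the equality case of H\"older's inequality, which I would either cite or re-derive from the equality case of Young's inequality $uv\le \frac{u^\alpha}{\alpha}+\frac{v^\beta}{\beta}$.
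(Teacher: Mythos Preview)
Your proposal is correct and follows exactly the classical route the paper alludes to: the paper does not give a detailed proof but simply remarks that ``the usual proof of Minkowski's inequality relies on H\"older's inequality'' and cites references, which is precisely the splitting $|p+q|^\alpha=|p+q|\cdot|p+q|^{\alpha-1}$ plus two H\"older applications that you carry out. Your careful handling of the equality case and of the finiteness of $\|p+q\|_{L_\alpha(\mu)}$ in fact goes beyond what the paper provides.
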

The usual proof of Minkowski's inequality relies on H\"older's inequality~\cite{Proof-1964,HolderDiv-2017}.
Following~\cite{HolderDiv-2017}, we define distances by measuring in several ways the tightness of the Minkowski's inequality.
When clear from context, we shall write $\|\cdot\|_\alpha$ for short instead of $\|\cdot\|_{L_\alpha(\mu)}$.
Thus Minkowski's inequality writes as:
\begin{equation}
\|p+q\|_\alpha \leq \|p\|_\alpha+\|q\|_\alpha.
\end{equation}
Minkowski's inequality proves that the $L_\alpha$-spaces are normed vector spaces. 

Notice that when $p(x)$ and $q(x)$ are probability densities (i.e., $\int p(x)\dmu(x)=\int q(x)\dmu(x)=1$), Minkowski's inequality
becomes an equality iff. $p(x)=q(x)$ almost everywhere, for $\alpha>1$.
Thus we can define the following novel Minkowski's distances between probability densities satisfying the identity of indiscernibles:

\begin{definition}[Minkowski difference distance]
For probability densities $p,q\in L_\alpha(\mu)$, we define the Minkowski difference $D_\alpha(\cdot,\cdot)$ distance for $\alpha\in (1,\infty)$ as:
\begin{equation}
D_\alpha(p,q) \eqdef \|p\|_\alpha+\|q\|_\alpha - \|p+q\|_\alpha\geq 0.
\end{equation}
\end{definition}

\begin{definition}[Minkowski log-ratio distance]
For probability densities $p,q\in L_\alpha(\mu)$, we define the Minkowski log-ratio distance $L_\alpha(\cdot,\cdot)$ for $\alpha\in (1,\infty)$ as:
\begin{equation}
L_\alpha(p,q) \eqdef -\log \frac{\|p+q\|_\alpha}{\|p\|_\alpha+\|q\|_\alpha} = \log \frac{\|p\|_\alpha+\|q\|_\alpha}{\|p+q\|_\alpha} \geq 0.
\end{equation}
\end{definition}

By construction, all these Minkowski distances are symmetric distances: 
Namely, $M_\alpha(p,q)=M_\alpha(q,p)$, $D_\alpha(p,q)=D_\alpha(q,p)$ and $L_\alpha(p,q)=L_\alpha(q,p)$.

Notice that $L_\alpha(p,q)$ is {\em scale-invariant}\footnote{Like any distance based on the log ratio of triangle inequality gap induced by a homogeneous norm.}: $L_\alpha(\lambda p,\lambda q)=L_\alpha(p,q)$ for any $\lambda>0$.
Scale-invariance is a useful property in many signal processing applications.
For example, the scale-invariant Itakura-Saito divergence (a Bregman divergence) has been successfully used in Nonnegative Matrix Factorization~\cite{NMFIS-2009} (NMF).
Distance $D_\alpha(p,q)$ is {\em homogeneous} since $D_\alpha(\lambda p,\lambda q)=|\lambda|D_\alpha(p,q)$
 for any $\lambda\in\bbR$ (and so is distance $M_\alpha(p,q)$).

\section{Closed-form formula for statistical mixtures of exponential families\label{sec:CF}}

In this section, we shall prove that $D_\alpha$ and $L_\alpha$ between statistical mixtures are in closed-form for all integer exponents 
(and $M_\alpha$ for  all even exponents) for mixtures of exponential families with conic natural parameter spaces.

Let us first define the positively {\em weighed geometric integral} $I$ of a set $\{p_1,\ldots, p_k\}$ of $k$ probability densities of $L_\alpha(\mu)$ as:
\begin{equation}
I(p_1,\ldots, p_k;\alpha_1,\ldots,\alpha_k) \eqdef \int_\calX p_1(x)^{\alpha_1}\ldots p_k(x)^{\alpha_k} \dmu(x),\quad
\alpha\in\bbR_+^k.
\end{equation}

An {\em exponential family}~\cite{EF-1986,EF-2009} $\calE_{t,\mu}$ is a set  $\{p_\theta(x)\}_\theta$ of probability densities wrt. $\mu$ which densities can be expressed proportionally canonically as:
\begin{equation}
p_\theta(x) \propto \exp(t(x)^\top \theta),
\end{equation}
where $t(x)$ is a  $D$-dimensional vector of sufficient statistics~\cite{EF-1986}.
The term $t(x)^\top \theta$ can be written equivalently as $\inner{t(x)}{\theta}$, where $\inner{\cdot}{\cdot}$ denotes the scalar product on $\bbR^D$.
Thus the normalized probability densities of $\calE_{t,\mu}$ can be written as:
\begin{equation}
p_\theta(x) = \exp\left(t(x)^\top \theta -F(\theta)\right),
\end{equation}
where 
\begin{equation}
F(\theta) \eqdef \log\int_\calX \exp(t(x)^\top \theta) \dmu(x),
\end{equation}
is called the {\em log-partition function} (also called cumulant function~\cite{EF-1986} or log-normalizer~\cite{EF-2009}).
The natural parameter space is:
\begin{equation}
\Theta \eqdef \left\{  \theta\in\bbR^D \st \int_\calX \exp(t(x)^\top \theta) \dmu(x) <\infty \right\}.
\end{equation}
Many common distributions (Gaussians, Poisson, Beta, etc.) belong to exponential families in disguise~\cite{EF-1986,EF-2009}.

\begin{lemma}
For probability densities $p_{\theta_1},\ldots, p_{\theta_k}$ belonging to the same exponential family $\calE_{t,\mu}$, we have: 
\begin{equation}
I(p_{\theta_1},\ldots, p_{\theta_k};\alpha_1,\ldots,\alpha_k) = \exp\left(
F\left(\sum_{i=1}^k \alpha_i\theta_i\right) - \sum_{i=1}^k \alpha_i F(\theta_i) 
\right)<\infty,
\end{equation}
provided that $\sum_{i=1}^k \alpha_i\theta_i\in\Theta$. 
\end{lemma}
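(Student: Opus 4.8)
The plan is to substitute the canonical density representation into the definition of $I$ and collect exponents. First I would write each density in its normalized exponential-family form $p_{\theta_i}(x) = \exp(t(x)^\top \theta_i - F(\theta_i))$, so that $p_{\theta_i}(x)^{\alpha_i} = \exp(\alpha_i t(x)^\top \theta_i - \alpha_i F(\theta_i))$. Taking the product over $i=1,\ldots,k$ and using linearity of the scalar product in its first argument, the pointwise product $p_{\theta_1}(x)^{\alpha_1}\cdots p_{\theta_k}(x)^{\alpha_k}$ becomes $\exp\bigl(t(x)^\top \bar\theta - \sum_{i=1}^k \alpha_i F(\theta_i)\bigr)$ with $\bar\theta \eqdef \sum_{i=1}^k \alpha_i\theta_i$, where the finite constant $\exp(-\sum_{i=1}^k \alpha_i F(\theta_i))$ factors out of the integral.

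Next I would recognize the remaining integral $\int_\calX \exp(t(x)^\top \bar\theta)\,\dmu(x)$ as exactly $\exp(F(\bar\theta))$ by the very definition of the log-partition function. This step uses precisely the hypothesis that all the $p_{\theta_i}$ belong to the \emph{same} exponential family $\calE_{t,\mu}$, i.e.\ they share the common base measure $\mu$ and the common sufficient-statistic vector $t$; otherwise the product would not reassemble into a single canonical exponent. Combining the two pieces gives $I(p_{\theta_1},\ldots,p_{\theta_k};\alpha_1,\ldots,\alpha_k) = \exp\bigl(F(\bar\theta) - \sum_{i=1}^k \alpha_i F(\theta_i)\bigr)$.

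Finiteness is then immediate: $\exp(F(\bar\theta))$ is finite exactly when $\bar\theta = \sum_{i=1}^k \alpha_i\theta_i$ lies in the natural parameter space $\Theta$, which is the stated proviso, while $\exp(-\sum_{i=1}^k \alpha_i F(\theta_i))$ is a finite constant since each $\theta_i\in\Theta$. There is essentially no obstacle here: the integrand is nonnegative, so pulling out the constant factor and applying the definition of $F$ needs no integrability argument beyond the membership $\bar\theta\in\Theta$ (Tonelli, or simply the definition of the integral of a nonnegative function, suffices). The mathematical content of the lemma is therefore the bookkeeping identity for the collected exponent together with the clean characterization $\bar\theta\in\Theta$ of its domain of validity.
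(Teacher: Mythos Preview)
Your proof is correct and follows essentially the same approach as the paper: substitute the canonical form, collect exponents, and identify the remaining integral via the log-partition function. The only cosmetic difference is that the paper adds and subtracts $F(\bar\theta)$ in the exponent so as to recognize the integrand as the normalized density $p_{\bar\theta}$ integrating to $1$, whereas you invoke the definition $\int_\calX \exp(t(x)^\top\bar\theta)\,\dmu(x)=\exp(F(\bar\theta))$ directly; these are the same computation.
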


\begin{proof}
\begin{eqnarray*}
&&I(p_{\theta_1},\ldots, p_{\theta_k};\alpha_1,\ldots,\alpha_k) =
 \int \prod_{i=1}^k 
\left(\exp\left(\left( t(x)^\top \theta_i - F(\theta_i)\right)\right)\right)^{\alpha_i} \dmu(x),\\
&=& \int  \exp\left( t(x)^\top (\sum_i \alpha_i\theta_i) - \sum_i \alpha_i F(\theta_i) 
+ \underbrace{F\left(\sum_i\alpha_i\theta_i\right)-F\left(\sum_i\alpha_i\theta_i\right)}_{=0} \right) \dmu(x),\\
&=& \exp\left(F\left(\sum_i\alpha_i\theta_i\right)-\sum_i \alpha_i F(\theta_i)  \right) 
\underbrace{\int_\calX 
 \exp\left( t(x)^\top \left(\sum_i\alpha_i\theta_i\right)-F\left(\sum_i\alpha_i\theta_i\right)\right)\dmu(x)}_{=1},\\
&=& \exp\left(F(\sum_i\alpha_i\theta_i)-\sum_i \alpha_iF(\theta_i)  \right),
\end{eqnarray*}
since $\int_\calX  \exp\left( t(x)^\top (\sum_i\alpha_i\theta_i)-F(\sum_i\alpha_i\theta_i)\right)\dmu(x)=\int_\calX p_{\sum_i\alpha_i\theta_i}(x)\dmu(x)=1$, 
provided that  $\bar\theta\eqdef \sum_i \alpha_i\theta_i\in\Theta$ (and $p_{\bar\theta}\in\calE_{t,\mu}$).
\end{proof}

In particular, the condition $\sum_i \alpha_i\theta_i\in\Theta$ always holds when the natural parameter space $\Theta$ is a {\em cone}.
In the remainder, we shall call those exponential families with natural parameter space being a cone, {\em Conic Exponential Families} (CEFs) for short. Note that when $\sum_i \alpha_i\theta_i\not\in\Theta$, 
the integral $I(p_{\theta_1},\ldots, p_{\theta_k};\alpha_1,\ldots,\alpha_k)$ diverges
(that is, $I(p_{\theta_1},\ldots, p_{\theta_k};\alpha_1,\ldots,\alpha_k)=\infty$).

Observe that for a CEF density $p_\theta(x)$, we have $p_\theta(x)^\alpha$ in $L_\alpha(\mu)$ for {\em any} $\alpha\in [1,\infty)$.

\begin{corollary}\label{cor:I}
We have $I(p_{\theta_1},\ldots, p_{\theta_k};\alpha_1,\ldots,\alpha_k) = \exp\left(
F\left(\sum_i \alpha_i\theta_i\right) - \sum_i \alpha_iF(\theta_i)
\right)<\infty$ for probability densities belonging to the same exponential family with natural parameter space $\Theta$ being a cone.
\end{corollary}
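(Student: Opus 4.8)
The plan is to obtain this as an immediate consequence of the preceding Lemma. The only hypothesis of that Lemma that is not granted for free is the requirement $\bar\theta \eqdef \sum_{i=1}^k \alpha_i\theta_i \in \Theta$, so the entire content of the proof is to check that the conic assumption on $\Theta$ forces this. First I would recall the standard fact that the natural parameter space $\Theta$ of any exponential family is convex (it is the effective domain $\{\theta : F(\theta)<\infty\}$ of the proper convex function $F$; equivalently this follows from H\"older's inequality applied to $\exp F(\lambda\theta_1+(1-\lambda)\theta_2)$). Combined with the hypothesis that $\Theta$ is a cone --- i.e.\ $\theta\in\Theta$ and $\lambda>0$ imply $\lambda\theta\in\Theta$ --- this makes $\Theta$ a convex cone, hence closed under arbitrary finite non-negative linear combinations.

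Concretely: given $\theta_i\in\Theta$ and $\alpha_i\geq 0$, each nonzero $\alpha_i\theta_i$ lies in $\Theta$ by the scaling property, and the sum of finitely many elements of $\Theta$ lies in $\Theta$ because, by convexity, their normalized average is in $\Theta$ and then scaling back up keeps it in $\Theta$ (an easy induction on $k$). Terms with $\alpha_i=0$ may simply be dropped; alternatively one observes that $0\in\Theta$ in the present setting, since $\int_\calX \exp(t(x)^\top 0)\dmu(x)=\mu(\calX)=1<\infty$ because $\mu$ is a probability measure. Thus $\bar\theta\in\Theta$, and in particular $p_{\bar\theta}\in\calE_{t,\mu}$, so the Lemma applies verbatim and delivers $I(p_{\theta_1},\ldots,p_{\theta_k};\alpha_1,\ldots,\alpha_k)=\exp\big(F(\sum_i\alpha_i\theta_i)-\sum_i\alpha_i F(\theta_i)\big)$. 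Finiteness is then automatic, since $\bar\theta\in\Theta$ gives $F(\bar\theta)<\infty$ and each $F(\theta_i)<\infty$ as well.

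I expect essentially no obstacle here: this is a corollary whose proof is a one-line specialization of the Lemma. The only points that merit a sentence of care are the bookkeeping distinction between a \emph{cone} (closed under positive scaling) and a \emph{convex cone} (additionally closed under addition) --- reconciled using the always-true convexity of $\Theta$ --- and the degenerate case $\alpha_i=0$, both handled as above.
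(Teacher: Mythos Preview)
Your proposal is correct and matches the paper's approach: the paper treats this as an immediate specialization of the Lemma, with the single sentence ``In particular, the condition $\sum_i \alpha_i\theta_i\in\Theta$ always holds when the natural parameter space $\Theta$ is a cone'' standing in for a proof. Your write-up is simply a more careful version of that one line, making explicit the (always-true) convexity of $\Theta$ needed to pass from closure under positive scaling to closure under non-negative linear combinations, and handling the $\alpha_i=0$ edge case --- details the paper leaves implicit.
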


We also note in passing that $I(p_1,\ldots, p_k;\alpha_1,\ldots,\alpha_k)<\infty$ for $\alpha\in\bbR^k$ for probability densities belonging to the same exponential family with natural parameter space being an {\em affine space} (e.g., Poisson or isotropic Gaussian families~\cite{fdivchi-2014}).

Let us define:
\begin{equation}\label{eq:Jdiv}
J_F({\theta_1},\ldots, {\theta_k};\alpha_1,\ldots,\alpha_k)\eqdef 
\sum_i \alpha_iF(\theta_i) - F\left(\sum_i \alpha_i\theta_i\right).
\end{equation} 
This quantity is called the {\em Jensen diversity}~\cite{BRJensen-2011} when $\alpha\in\Delta_k$ (the $(k-1)$-dimensional standard simplex), or Bregman information\footnote{Because $\sum_i \alpha_i B_F(\theta_i:\bar\theta)=J_F({\theta_1},\ldots, {\theta_k};\alpha_1,\ldots,\alpha_k)$ for the barycenter $\bar\theta=\sum_i\alpha_i \theta_i$, where $B_F(\theta:\theta')=F(\theta)-F(\theta')-(\theta-\theta')^\top \nabla F(\theta')$ is a Bregman divergence.} in~\cite{BD-2005}.
Although the Jensen diversity is non-negative when $\alpha\in\Delta_k$, this Jensen diversity of Eq.~\ref{eq:Jdiv} maybe negative when 
$\alpha\in\bbR_+^k$.
When $\alpha\in\bbR_+^k$, we thus call the Jensen diversity the {\em generalized Jensen diversity}.
It follows that we have:
\begin{equation}
I(p_{\theta_1},\ldots, p_{\theta_k};\alpha_1,\ldots,\alpha_k) = \exp\left(
-J_F({\theta_1},\ldots, {\theta_k};\alpha_1,\ldots,\alpha_k)
\right)
\end{equation}

The CEFs include the Gaussian family, the Wishart family, the Binomial/multinomial family, etc.~\cite{EF-1986,EF-2009,CFMixture-2012}.

Let us consider a finite  positive mixture $\tilde{m}(x)=\sum_{i=1}^k w_i p_i(x)$ of $k$ probability densities, where the 
weight vector $w\in\bbR_+^k$ are not necessarily normalized to one.

\begin{lemma}\label{lem:powerposmeas}
For a finite positive mixture $\tilde{m}(x)$ with components belonging to the same CEF, $\|\tilde{m}\|_{L_\alpha(\mu)}$ is {\em finite} and in {\em closed-form}, 
for any integer $\alpha\geq 2$.
\end{lemma}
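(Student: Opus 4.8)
The plan is to expand the $\alpha$-th power of the mixture using the multinomial theorem and then recognize each resulting term as a weighted geometric integral, which is in closed-form by Corollary~\ref{cor:I}. Concretely, since $\alpha$ is a positive integer and $\tilde{m}(x)=\sum_{i=1}^k w_i p_i(x)$ with all $w_i\geq 0$ and all $p_i$ densities (hence $\tilde{m}(x)\geq 0$), we have $|\tilde{m}(x)|^\alpha=\tilde{m}(x)^\alpha$, so the absolute value disappears. The multinomial theorem then gives
\begin{equation}
\tilde{m}(x)^\alpha=\left(\sum_{i=1}^k w_i p_i(x)\right)^\alpha=\sum_{\substack{\beta\in\bbN^k\\ \beta_1+\cdots+\beta_k=\alpha}}\binom{\alpha}{\beta_1,\ldots,\beta_k}\left(\prod_{i=1}^k w_i^{\beta_i}\right)\prod_{i=1}^k p_i(x)^{\beta_i}.
\end{equation}
Integrating term by term against $\mu$ (the sum is finite, so this is justified without any convergence subtlety), each integral $\int_\calX \prod_{i=1}^k p_i(x)^{\beta_i}\dmu(x)$ is exactly $I(p_1,\ldots,p_k;\beta_1,\ldots,\beta_k)$ with integer exponents $\beta_i\geq 0$.

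Next I would invoke the conic assumption: since the components lie in a common CEF $\calE_{t,\mu}$ with natural parameter space $\Theta$ a cone, any nonnegative combination $\sum_i \beta_i\theta_i$ stays in $\Theta$, so Corollary~\ref{cor:I} applies and yields
\begin{equation}
I(p_{\theta_1},\ldots,p_{\theta_k};\beta_1,\ldots,\beta_k)=\exp\left(F\left(\sum_{i=1}^k\beta_i\theta_i\right)-\sum_{i=1}^k\beta_i F(\theta_i)\right)<\infty.
\end{equation}
Summing over the $\binom{\alpha+k-1}{k-1}$ multi-indices $\beta$ gives a finite closed-form expression for $\int_\calX \tilde{m}(x)^\alpha\dmu(x)$, and taking the $\frac{1}{\alpha}$-th power gives $\|\tilde{m}\|_{L_\alpha(\mu)}=\left(\int_\calX \tilde{m}(x)^\alpha\dmu(x)\right)^{1/\alpha}$ in closed-form. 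Each summand is finite and there are finitely many of them, so the total is finite, which also confirms $\tilde{m}\in L_\alpha(\mu)$.

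There is no real obstacle here — the argument is essentially bookkeeping — but the one point that deserves a careful sentence is \emph{why the absolute value in the definition of $\|\cdot\|_{L_\alpha(\mu)}$ may be dropped}: this needs $w\in\bbR_+^k$ so that $\tilde{m}$ is a genuinely nonnegative function, and it needs $\alpha$ to be an integer so that $\tilde{m}^\alpha$ is a polynomial in the $p_i$ amenable to the multinomial expansion (for non-integer $\alpha$, $(\sum_i w_i p_i)^\alpha$ does not expand into finitely many geometric-integral terms, which is exactly why the lemma restricts to integer $\alpha\geq 2$). I would also remark that the expansion makes the closed-form explicit: writing $C_\alpha^{\beta}=\binom{\alpha}{\beta_1,\ldots,\beta_k}$, one gets
\begin{equation}
\|\tilde{m}\|_{L_\alpha(\mu)}^\alpha=\sum_{\beta_1+\cdots+\beta_k=\alpha} C_\alpha^{\beta}\left(\prod_{i=1}^k w_i^{\beta_i}\right)\exp\left(F\left(\sum_{i=1}^k\beta_i\theta_i\right)-\sum_{i=1}^k\beta_i F(\theta_i)\right),
\end{equation}
which is the formula one would then reuse in the next section to evaluate $D_\alpha$ and $L_\alpha$ between two mixtures (applying it to $\tilde{m}=p$, $\tilde{m}=q$, and $\tilde{m}=p+q$).
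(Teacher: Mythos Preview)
Your proof is correct and follows essentially the same approach as the paper: expand $\tilde{m}(x)^\alpha$ via the multinomial theorem, integrate term by term, and evaluate each resulting weighted geometric integral in closed form via Corollary~\ref{cor:I}. Your write-up even adds a useful clarification the paper leaves implicit, namely the explicit justification that $|\tilde m(x)|^\alpha=\tilde m(x)^\alpha$ because the weights and densities are nonnegative.
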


\begin{proof}
Consider the multinomial expansion $\tilde{m}(x)^\alpha$  obtained by applying the multinomial theorem~\cite{MT-1968}:
\begin{equation}
\tilde{m}(x)^\alpha = \sum_{\substack{\sum_{i=1}^k \alpha_i=\alpha\\ \alpha_i\in\bbN}} 
\binom{\alpha}{\alpha_1,\ldots,\alpha_k} \prod_{j=1}^k (w_jp_j(x))^{\alpha_j},
\end{equation}
where
\begin{equation}
\binom{\alpha}{\alpha_1,\ldots,\alpha_k} \eqdef \frac{\alpha!}{\alpha_1! \times \ldots \times \alpha_k!} ,
\end{equation}
is the {\em multinomial coefficient}~\cite{IntroDM-2012}. 
It follows that:
\begin{equation}
\int \tilde{m}(x)^\alpha\dmu(x) = \sum_{\substack{\sum_i \alpha_i=\alpha\\ \alpha_i\in\bbN}} \binom{\alpha}{\alpha_1,\ldots,\alpha_k} 
\left(\prod_{j=1}^k w_j^{\alpha_j}\right) I(p_1,\ldots, p_k;\alpha_1,\ldots,\alpha_k).
\end{equation}

Thus the term $\int \tilde{m}(x)^\alpha\dmu(x)$ amounts to a positively weighted sum of integrals of monomials that are  positively weighted geometric means of mixture components. 
When $p_i=p_{\theta_i}$, since $I(p_{\theta_1},\ldots, p_{\theta_k};\alpha_1,\ldots,\alpha_k)<\infty$ using Eq.~\ref{cor:I}, we conclude that $\tilde{m}\in {L_\alpha(\mu)}$ for $\alpha\in\bbN$, and we get the formula:
\begin{equation}\label{eq:posmixk}
\|\tilde{m}\|_{L_\alpha(\mu)}=
\left(
\sum_{\substack{\sum_i \alpha_i=\alpha\\ \alpha_i\in\bbN}} \binom{\alpha}{\alpha_1,\ldots,\alpha_k} 
\left(
\prod_{j=1}^k w_j^{\alpha_j}
\right)
\exp\left(
-J_F({\theta_1},\ldots, {\theta_k};\alpha_1,\ldots,\alpha_k)
\right)
\right)^{\frac{1}{\alpha}}
, 
\end{equation}
for $\alpha\in\bbN$. 
\end{proof}

A naive multinomial expansion of $\tilde{m}(x)^\alpha$ yields $k^\alpha$ terms that can then be simplified.
Using the multinomial theorem, there are $\binom{k+\alpha-1}{\alpha}$ integral terms in the formula of $\int (\sum_{i=1}^k w_i p_i(x))^\alpha \dmu(x)$.
This number corresponds to the number of sequences of $k$ disjoint subsets whose union is $\{1,\ldots, \alpha\}$ (also called the number of ordered partitions but beware that some sets may be empty).

The multinomial expansion can be calculated efficiently using a generalization of Pascal's triangle, called {\em Pascal's simplex}~\cite{PascalSimplex-2012}, thus avoiding to compute from scratch all the multinomial coefficients.

We have the following generalized Pascal's recurrence formula for calculating the multinomial coefficients:
\begin{equation}
\binom{\alpha}{\alpha_1,\ldots,\alpha_k} = \sum_{i=1}^k \binom{\alpha-1}{\alpha_1,\ldots,\alpha_i-1,\ldots,\alpha_k},
\end{equation}
with the terminal cases
$\binom{\alpha}{\alpha_1,\ldots,\alpha_k} =0$ if there exists an $\alpha_i<0$.
Also by convention, we set conveniently $\binom{\alpha}{\alpha_1,\ldots,\alpha_k} =0$ if there exists $\alpha_i>\alpha$.

An efficient way to implement the multinomial expansion using nested iterative loops follows from this identity:
\begin{equation}
\left(\sum_{i=1}^k x_i\right)^\alpha = \sum_{\alpha_1=0}^\alpha \sum_{\alpha_2=0}^{\alpha_1} \ldots \sum_{\alpha_{k-1}=0}^{\alpha_{k-2}}
\binom{\alpha}{\alpha_1}  \binom{\alpha_1}{\alpha_2}\ldots \binom{\alpha_{k-1}}{\alpha_{k-2}}
x_1^{\alpha-\alpha_1}  x_2^{\alpha_1-\alpha_2} \ldots x_{k-1}^{\alpha_{k-2}-\alpha_{k-1}} x_k^{\alpha_{k-1}}.
\end{equation}

We are now ready to show when the statistical Minkowski's distances $M_\alpha, D_\alpha$ and $L_\alpha$ are in closed-form for mixtures of CEFs using Lemma~\ref{lem:powerposmeas}.

\begin{theorem}[Closed-form formula for Minkowski's distances]\label{thm:mcf}
For mixtures $m=\sum_{i=1}^k w_ip_{\theta_i}$ and $m'=\sum_{j=1}^{k'} w_j'p_{\theta_j'}$ of CEFs $\calE_{\mu,t}$,
$D_\alpha$ and $L_\alpha$ admits closed-form formula for integers $\alpha\geq 2$, and
 $M_\alpha$ is in closed-form when $\alpha\geq 2$ is an even positive integer. 
\end{theorem}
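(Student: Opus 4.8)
The plan is to reduce everything to Lemma~\ref{lem:powerposmeas} by observing that the three Minkowski distances are built out of $L_\alpha$-norms of \emph{positive} mixtures whose components all lie in the same CEF. First I would note that $m$, $m'$, and $m+m'=\sum_i w_i p_{\theta_i} + \sum_j w_j' p_{\theta_j'}$ are each finite positive mixtures of densities drawn from $\calE_{\mu,t}$ (with nonnegative weights); hence by Lemma~\ref{lem:powerposmeas}, for every integer $\alpha\geq 2$ the quantities $\|m\|_{L_\alpha(\mu)}$, $\|m'\|_{L_\alpha(\mu)}$, and $\|m+m'\|_{L_\alpha(\mu)}$ are all finite and admit the closed-form expansion of Eq.~\ref{eq:posmixk}, with the generalized Jensen diversity $J_F$ evaluated via the log-partition function $F$ of the family. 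In particular each is computable, and the fact that $\bar\theta=\sum_i\alpha_i\theta_i\in\Theta$ is automatic because $\Theta$ is a cone and the $\alpha_i$ are nonnegative integers summing to $\alpha$.

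Next I would simply substitute these three closed-form norm expressions into the definitions: $D_\alpha(m,m') = \|m\|_\alpha + \|m'\|_\alpha - \|m+m'\|_\alpha$ is then a difference of three closed-form quantities, hence closed-form for all integers $\alpha\geq 2$; likewise $L_\alpha(m,m') = \log\frac{\|m\|_\alpha+\|m'\|_\alpha}{\|m+m'\|_\alpha}$ is a logarithm of a ratio of closed-form quantities, hence closed-form for all integers $\alpha\geq 2$ (here one should remark that the denominator $\|m+m'\|_\alpha$ is strictly positive since $m+m'$ is not a.e.\ zero, so the ratio and its logarithm are well defined). For $M_\alpha$ the subtlety is that $M_\alpha(m,m') = \left(\int |m(x)-m'(x)|^\alpha\,\dmu(x)\right)^{1/\alpha}$ involves the \emph{difference} $m-m'$, whose mixture weights are no longer all nonnegative, so Lemma~\ref{lem:powerposmeas} does not directly apply.

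The main obstacle, and the only real content beyond bookkeeping, is handling the absolute value in $M_\alpha$. Here I would use that when $\alpha$ is an \emph{even} positive integer, $|m(x)-m'(x)|^\alpha = (m(x)-m'(x))^\alpha$ pointwise, so the absolute value disappears. I would then expand $(m-m')^\alpha = \left(\sum_i w_i p_{\theta_i} - \sum_j w_j' p_{\theta_j'}\right)^\alpha$ by the multinomial theorem exactly as in the proof of Lemma~\ref{lem:powerposmeas}, the only change being that some monomials carry a sign $(-1)^{(\text{number of primed factors})}$ coming from the negative weights $-w_j'$. Each resulting monomial is again a positively-weighted geometric integral $I$ of components of $\calE_{\mu,t}$, which by Corollary~\ref{cor:I} equals $\exp(-J_F(\cdot;\cdot))<\infty$ since all exponents are nonnegative integers and $\Theta$ is a cone; integrability of the whole sum follows because it is a finite sum of finite terms (alternatively, dominated by the expansion of $(m+m')^\alpha$, which is integrable by Lemma~\ref{lem:powerposmeas}). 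Collecting terms gives
\begin{equation}
M_\alpha(m,m') = \left(\sum_{\substack{\sum_\ell \alpha_\ell = \alpha\\ \alpha_\ell\in\bbN}} \binom{\alpha}{\alpha_1,\ldots,\alpha_{k+k'}} \left(\prod_{i=1}^k w_i^{\alpha_i}\right)\left(\prod_{j=1}^{k'} (-w_j')^{\alpha_{k+j}}\right) \exp\left(-J_F(\theta_1,\ldots,\theta_k,\theta_1',\ldots,\theta_{k'}';\alpha_1,\ldots,\alpha_{k+k'})\right)\right)^{\frac{1}{\alpha}},
\end{equation}
a closed-form expression, valid whenever $\alpha\geq 2$ is even. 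I would close by emphasizing that the oddness obstruction is genuine: for odd $\alpha$ the integrand $|m-m'|^{\alpha}$ cannot be rewritten as a polynomial in $m,m'$, which is exactly why $M_\alpha$ is only claimed for even exponents while $D_\alpha,L_\alpha$ work for all integer exponents.
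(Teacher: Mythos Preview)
Your proof is correct and, for $D_\alpha$ and $L_\alpha$, identical to the paper's argument. For $M_\alpha$ you take a slightly more direct route: the paper first rewrites $|m-m'|^\alpha=\bigl((m-m')^2\bigr)^{\alpha/2}$, expands the square into a signed mixture $\tilde m=\sum_l w_l'' p_{\theta_l,\theta_l'}$ of products of pairs of densities, and then applies the multinomial theorem at exponent $\alpha/2$; you instead observe immediately that for even $\alpha$ one has $|m-m'|^\alpha=(m-m')^\alpha$ and apply the multinomial theorem once to the signed $(k+k')$-term mixture $m-m'$. Both arguments rest on the same two points---evenness kills the absolute value, and the multinomial expansion with possibly negative weights still reduces to finitely many integrals $I(\cdot;\cdot)$ handled by Corollary~\ref{cor:I}---so the difference is cosmetic. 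Your version has the mild advantage of a single expansion over $k+k'$ components (yielding $\binom{k+k'+\alpha-1}{\alpha}$ terms) rather than a two-stage expansion over $O(k^2+kk'+{k'}^2)$ product-densities, so it is a little cleaner to state and count.
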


\begin{proof}
For $D_\alpha$ and $L_\alpha$, it is enough to show that $\|m\|_{L_\alpha(\mu)}, \|m'\|_{L_\alpha(\mu)}$
 and $\|m+m'\|_{L_\alpha(\mu)}$ are all in closed-form. This  follows from Lemma~\ref{lem:powerposmeas}
 by setting $\tilde{m}$ to be $m$, $m'$ and $m+m'$, respectively.
The overall number of generalized Jensen diversity terms in the formula of $D_\alpha$ or $L_\alpha$ is 
$O\left(\binom{k+k'+\alpha-1}{\alpha}\right)$.

Now, consider distance $M_\alpha$.
To get rid of the absolute value in $M_\alpha$ for even integers $\alpha$, we rewrite $M_\alpha$ as follows:
\begin{eqnarray*}
M_\alpha(m,m')&=&\|m-m'\|_{L_\alpha(\mu)}=\left(\int |m(x)-m'(x)|^\alpha \dmu(x)\right)^{\frac{1}{\alpha}},\\
&=& \left(\int 
\left(  \left(m(x)-m'(x)\right)^2  \right)^{\frac{\alpha}{2}}
\dmu(x)
\right)^{\frac{1}{\alpha}}.
\end{eqnarray*}

Let $\tilde{m}(x)=(m(x)-m'(x))^2$.
We have:
\begin{eqnarray}
\tilde{m}(x) &=& (m(x)-m'(x))^2,\\
&=& m(x)^2+m'(x)^2-2m(x)m'(x),\\
&=& \left(\sum_{i=1}^k w_ip_{\theta_i}(x)\right)^2 + \left(\sum_{j=1}^{k'} w_j'p_{\theta_j'}(x)\right)^2
-2\sum_{i=1}^k\sum_{j=1}^{k'} w_iw_j' p_{\theta_i}(x)p_{\theta_j'}(x).\label{eq:mm}
\end{eqnarray}
We have the density products $p_{\theta,\theta'}\eqdef p_{\theta}p_{\theta'}=I(p_{\theta},p_{\theta'};1,1)\in L_{\frac{\alpha}{2}}(\mu)$ (using Lemma~\ref{lem:powerposmeas}) for any $\theta,\theta'\in\Theta$ and $\alpha\geq 2$. 
When $\alpha=2$, $\frac{\alpha}{2}=1$, and we easily reach a closed-form formula for $M_2(m,m')$.
Otherwise, let us expand all the terms in Eq.~\ref{eq:mm}, and rewrite $\tilde{m}(x)=\sum_{l=1}^K w_l'' p_{\theta_l,\theta'_l}$.
Now, a key difference  is that $w_l''\in\bbR$, and not necessarily positive.
Nevertheless, since $\frac{\alpha}{2}\in\bbN$, we can still use the multinomial theorem 
to expand $\tilde{m}(x)^{\frac{\alpha}{2}}$, distribute the integral over all terms, and
compute elementary integrals $I(p_{\theta_1,\theta'_1},\ldots, p_{\theta_K,\theta'_K};\alpha_1',\ldots,\alpha_K')$ with $\sum_{l=1}^K \alpha_i'=\frac{\alpha}{2}$  in closed-form.
Thus $M_\alpha$ is available in closed-form for mixtures of CEFs for all even positive integers $\alpha\geq 2$.
The number of terms in the $M_\alpha$ formula is $O\left(\binom{\max(k^2,{k'}^2)+\alpha-1}{\alpha}\right)$.
\end{proof}

Note that there exists a generalization\footnote{There also exists a generalization of the multinomial theorem to real exponents, however, this is much less known in the literature (see~\url{http://fractional-calculus.com/multinomial_theorem.pdf}).} of the binomial theorem to {\em real} exponents $\alpha\in\bbR$ called {\em Newton's generalized binomial theorem} using an infinite series of general binomial coefficients:
\begin{equation}\label{eq:gbin}
(x_1+x_2)^\alpha=\sum_{i=0}^\infty \binom{\alpha}{i} x_1^{\alpha-i} x_2^i,
\end{equation}
with the  generalized binomial coefficient defined by:
$$
\binom{\alpha}{i} \eqdef \frac{\alpha (\alpha-1)\ldots (\alpha-i+1)}{i!}  = \frac{\Gamma(\alpha+1)}{\Gamma(\alpha-i+1)\Gamma(i+1)},
$$
where $\Gamma(x)\eqdef \int_0^\infty t^{x-1} e^{-t} \mathrm{d}t$ is the Gamma function extending the factorial: $\Gamma(n)=(n-1)!$.
Equation~\ref{eq:gbin} is only valid whenever the infinite series converge. That is, for $|x_1| \geq |x_2|$.
When extending to mixture densities (i.e., $(w_1p_1(x)+w_2p_2(x))^\alpha$) and taking the integral, we therefore need to split the integral  into two integrals depending on whether $w_1p_1(x)\geq w_2p_2(x)$, or not. 
Furthermore, we need to compute these integrals on truncated support domains: This becomes very tricky as the dimension of the support increase~\cite{Genz-2009}.

\section{Some examples of conic exponential families\label{sec:CEF}}

Let us report  a few conic exponential families with their respective canonical decompositions.
The measure $\mu$ is usually either the Lebesgue measure on the Euclidean space (i.e., $\dmu(x)=\dx$), or the counting measure.

\begin{itemize}

\item {\bf Bernoulli/multinomial families.} 
The Bernoulli density is $p(x; \lambda)=\lambda^x (1-\lambda)^{1-x}$  with $\lambda\in(0,1)=\Delta_1$, for $\calX=\{0,1\}$. 
The natural parameter is $\theta=\log\frac{\lambda}{1-\lambda}$ and the conic natural parameter space is
$\Theta=\mathbb{R}$. The log-partition function is $F(\theta)=\log (1+e^\theta)$. The sufficient statistics is $t(x)=x$.

The multinomial density generalizes the Bernoulli and the binomial densities.
Here, we consider the categorical distribution also called  ``multinoulli'' distribution.
The multinoulli density is given by:
$$
p(x;\lambda_1,\ldots,\lambda_d)= \prod_{i=1}^d \lambda_i^{x_i},
$$
where $\lambda\in\Delta_d$, the $(d-1)$-dimensional standard simplex.
We have $\calX=\{0,1\}^d$.
The sufficient statistic vector is $t(x)=(x_1,\ldots,x_{d-1})$.
The natural parameter is a $(d-1)$-dimensional vector with natural coordinates 
$\theta=\left(\log\frac{\lambda_1}{\lambda_d},\ldots,\log\frac{\lambda_{d-1}}{\lambda_d}\right)$.
The conic natural parameter space is  $\Theta=\bbR^{d-1}$ (ie., a non-pointed cone).
The log-partition function is $F(\theta)=\log (1+\sum_{i=1}^{d-1} e^{\theta_i})$.

\item {\bf Zero-centered Laplacian family.}
The density is $p(x;\sigma)=\frac{1}{2\sigma} e^{-\frac{|x|}{\sigma}}$ and the sufficient statistic is $t(x)=|x|$. 
The natural parameter is $\theta=-\frac{1}{\sigma}$ with the conic parameter space $\Theta=(-\infty,0)=\bbR_{--}$.
The log-normalizer is $F(\theta)=\log (\frac{2}{-\theta})$.
See~\cite{LaplacianMM-2007} for an application of Laplacian mixtures.

\item {\bf Multivariate Gaussian family.} 
The probability density of a $d$-variante Gaussian distribution is:
$$
p(x;\mu,\Sigma) =
 \frac{1}{ (2\pi)^{d/2} |\Sigma|^{1/2} } \exp \left( - \frac{(x-\mu)^T \Sigma^{-1}(x-\mu)}{2} \right),\quad x\in\bbR^d
$$
where $|\Sigma|$ denotes the determinant of the positive-definite matrix $\Sigma$.
The natural parameter consists in a vector part $\theta_v$ and a matrix part $\theta_M$:
$\theta=(\theta_v,\theta_M)=(\Sigma^{-1}\mu,\Sigma^{-1})$.
The conic natural parameter space is $\Theta=\mathbb{R}^d \times S^d_{++}$,
 where $S^d_{++}$ denotes the cone of positive definite matrices of dimension $d\times d$.
The sufficient statistics are $(x,xx^\top)$.
The log-partition function is: 
$$
F(\theta)=\frac{1}{2}\theta_v^T \theta_M^{-1}\theta_v-\frac{1}{2}\log |\theta_M|+\frac{d}{2}\log 2\pi.
$$

\item {\bf Wishart family}.
The probability density is
$$
p(X;n,S) = 
\frac{|X|^{\frac{n-d-1}{2}} e^{-\frac{1}{2}\tr(S^{-1}X)}}{2^{\frac{nd}{2}} |S|^{\frac{n}{2}}\Gamma_d\left(\frac{n}{2}\right)},
\quad X\in S_{++}^d
$$
with $S\succ 0$ denoting the scale matrix  and $n>d-1$ denoting the number of degrees of freedom,
where $\Gamma_d$ is the multivariate Gamma function: 
$$
\Gamma_d(x)=
\pi^{d(d-1)/4}\prod_{j=1}^d
\Gamma\left( x+(1-j)/2\right)
. 
$$
$\tr(X)$ denotes the trace of matrix $X$.
The natural parameter is composed of a scalar $\theta_s$ and a matrix part $\theta_M$:
 $\theta=(\theta_s,\theta_M)=(\frac{n-d-1}{2},S^{-1})$.
The conic natural parameter space is
$\Theta=\mathbb{R}_+\times S^d_{++}$.
The sufficient statistics are $(\log |X|,X)$.
The log-partition function is:
$$
F(\theta)=\frac{(2\theta_s+d+1)d}{2}\log 2 + \left(\theta_s+\frac{d+1}{2}\right)\log |\theta_M|+\log \Gamma_d\left(\theta_s+\frac{d+1}{2}\right).
$$
See~\cite{WishartMM-2011} for an application of Wishart mixtures.
 
\end{itemize}

\section{Minkowski's diversity index\label{sec:MPD}}

Informally speaking, a {\em diversity index} is a quantity that measures the variability of elements in a data set (i.e., the diversity of a population).
For example, the (sample) variance of a (finite) point set is a {diversity index}. 
Point sets uniformly filling a large volume have large variance (and a large diversity index) 
while point sets with points concentrating to their centers of mass have low variance (and a small diversity index).

Recall that the Jensen diversity index~\cite{Nielsen-2017} of a normalized weighted set $\{p_1=p_{\theta_1},\ldots, p_n=p_{\theta_n}\}$ of densities belonging to the same exponential family (also called information radius~\cite{IRad-1999} or Bregman information~\cite{BD-2005,BVP-2009}) is defined for a strictly convex generator $F$ by:
$$ 
J_F(\theta_1,\ldots, \theta_n;w_1,\ldots,w_n) \eqdef \sum_{i=1}^n w_i F(\theta_i)  - F\left(\sum_{i=1}^n w_i\theta_i\right)  \geq 0.
$$
When $F(\theta)=\frac{1}{2}\inner{\theta}{\theta}$, we recover from $J_F$ the variance.

We shall consider finite mixtures~\cite{McLachlan-2000,BD-2005} with {\em linearly independent} component densities.
Using Minkowski's inequality iteratively for $f_1,\ldots, f_n\in L_\alpha(\mu)$, we get:
\begin{equation}
\left(\int \left|\sum_{i=1}^n f_i(x)\right|^\alpha \dmu(x)\right)^{\frac{1}{\alpha}}
\leq
\sum_{i=1}^n \left(\int |f_i(x)|^\alpha\dmu(x)\ \right)^{\frac{1}{\alpha}}.
\end{equation}
When $\alpha>1$, equality holds when the $f_i$'s are proportional (a.e. $\mu$).
By setting $f_i=w_ip_i$, we define the {\em Minkowski's diversity index}:

\begin{definition}[Minkowski's diversity index]
Define the Minkowski diversity index of $n$ weighted probability densities of $L_\alpha(\mu)$ for $\alpha>1$ by:
\begin{eqnarray}\label{eq:mdivind}
J^M_\alpha(p_1,\ldots, p_n;w_1,\ldots, w_n)&\eqdef&
\sum_{i=1}^n w_i\left( \int p_i(x)^\alpha \dmu(x) \right)^{\frac{1}{\alpha}}
- \left(\int \left|\sum_i w_ip_i(x)\right|^\alpha \dmu(x) \right)^{\frac{1}{\alpha}},\\
&=& \sum_{i=1}^n w_i \|p_i\|_\alpha - \left\|\sum_{i=1}^n w_i p_i\right\|_\alpha \geq 0.
\end{eqnarray}
\end{definition}

It follows a closed-form formula for the Minkowski's diversity index of a weighted set of distributions (ie., a mixture) belonging to the same CEF:
\begin{corollary}
The Minkowski's diversity index of $n$ weighted probability distributions belonging to the same conic exponential family is 
finite and admits a closed-form formula for any integer $\alpha\geq 2$.
\end{corollary}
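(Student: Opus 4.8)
The plan is to observe that the Minkowski diversity index $J^M_\alpha$ is, by its very definition in Eq.~\ref{eq:mdivind}, a finite $\bbR$-linear combination of the $n$ component norms $\|p_{\theta_i}\|_\alpha$ together with the single mixture norm $\left\|\sum_i w_i p_{\theta_i}\right\|_\alpha$. Hence it suffices to show that each of these $n+1$ quantities is finite and available in closed form for any integer $\alpha\geq 2$, and then assemble them.

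First I would dispatch the individual component norms. For each $i$ we have $\|p_{\theta_i}\|_\alpha = \left(\int p_{\theta_i}(x)^\alpha \dmu(x)\right)^{1/\alpha} = I(p_{\theta_i};\alpha)^{1/\alpha}$, and since $\Theta$ is a cone, $\alpha\theta_i\in\Theta$, so Corollary~\ref{cor:I} gives $I(p_{\theta_i};\alpha) = \exp\!\left(F(\alpha\theta_i) - \alpha F(\theta_i)\right) < \infty$. Therefore $\|p_{\theta_i}\|_\alpha = \exp\!\left(\tfrac{1}{\alpha}\left(F(\alpha\theta_i) - \alpha F(\theta_i)\right)\right)$, a closed-form expression (this step in fact goes through for any real $\alpha\geq 1$, using the observation made after Corollary~\ref{cor:I} that $p_\theta^\alpha\in L_\alpha(\mu)$).

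Next I would treat the mixture norm. The weighted sum $\tilde m = \sum_{i=1}^n w_i p_{\theta_i}$ is a finite \emph{positive} mixture of densities of the same CEF (the weights lie in the simplex, in particular they are nonnegative), so Lemma~\ref{lem:powerposmeas} applies verbatim with $\tilde m$ in the role of the positive mixture: for every integer $\alpha\geq 2$, $\|\tilde m\|_{L_\alpha(\mu)}$ is finite and given in closed form by Eq.~\ref{eq:posmixk}, i.e. a sum over compositions $\alpha_1+\cdots+\alpha_n=\alpha$ with $\alpha_i\in\bbN$ of multinomial-coefficient-weighted terms $\prod_j w_j^{\alpha_j}\,\exp\!\left(-J_F(\theta_1,\ldots,\theta_n;\alpha_1,\ldots,\alpha_n)\right)$, the whole sum raised to the power $1/\alpha$.

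Combining the two parts, $J^M_\alpha(p_1,\ldots,p_n;w_1,\ldots,w_n) = \sum_i w_i\|p_{\theta_i}\|_\alpha - \|\tilde m\|_\alpha$ is a difference of finitely many finite closed-form quantities, hence itself finite and in closed form; its non-negativity is already supplied by the iterated Minkowski inequality stated just before the definition. I do not expect any real obstacle here — the corollary is essentially an application of Corollary~\ref{cor:I} and Lemma~\ref{lem:powerposmeas} — the only points worth flagging being that $\alpha$ must be an integer $\geq 2$ for the multinomial expansion of the mixture norm to terminate (the component norms alone would tolerate any real $\alpha\geq 1$), and that the standing assumption of linearly independent components guarantees the strict positivity $J^M_\alpha>0$ outside the degenerate case.
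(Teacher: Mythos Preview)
Your proposal is correct and matches the paper's intended argument: the paper gives no explicit proof for this corollary, simply stating it as an immediate consequence of the preceding results, and your write-up spells out precisely that reasoning---apply Corollary~\ref{cor:I} to each component norm $\|p_{\theta_i}\|_\alpha$ and Lemma~\ref{lem:powerposmeas} to the mixture norm $\|\sum_i w_i p_{\theta_i}\|_\alpha$, then combine.
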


\section{Conclusion and perspectives\label{sec:concl}}
Designing novel statistical distances which admit closed-form formula for Gaussian mixture models is important for a wide range of applications in machine learning, computer vision and signal processing~\cite{CS-2006}.
In this paper, we proposed to use the Minkowski's inequality to design novel statistical symmetric Minkowski  distances by measuring the tightness of the inequality either as an arithmetic difference or as a log-ratio of the left-hand-side and right-hand-side of the inequality.
We showed that these novel statistical Minkowski distances yield closed-form formula for mixtures of exponential families with conic natural parameter spaces whenever the integer exponent $\alpha\geq 2$.
In particular, this result holds for Gaussian mixtures, Bernoulli mixtures, Wishart mixtures, etc.
We termed those families as Conic Exponential Families (CEFs).
We also reported a closed-form formula for the ordinary statistical Minkowski distance for even positive integer exponents.
Finally, we defined the Minkowski's diversity index of a weighted population of probability distributions (a mixture), and proved that this diversity index admits a closed-form formula when the distributions belong to the same CEF.

Let us conclude by listing the formula of the statistical Minkowski distances for $\alpha=2$ for comparison with 
the Cauchy-Schwarz (CS) divergence:

\begin{eqnarray*}
M_2(m_1,m_2) &\eqdef&  \|m_1-m_2\|_2,\\ 
D_2(m_1,m_2) &\eqdef&  \|m_1+m_2\|_2 - (\|m_1\|_2+\|m_2\|_2),\\
L_2(m_1,m_2) &\eqdef&  -\log \frac{\|m_1+m_2\|_2}{\|m_1\|_2+\|m_2\|_2},\\
\CS(m_1,m_2) &\eqdef&  -\log \frac{\|m_1m_2\|_1}{\|m_1\|_2\|m_2\|_2}= -\log \frac{\inner{m_1}{m_2}_2}{\|m_1\|_2\|m_2\|_2},
\end{eqnarray*}
where $\inner{f}{g}_2=\int f(x)g(x)\dmu(x)$ for $f,g\in L_2(\mu)$. 
Note that for $\alpha=2$, $L_2(\mu)$ is a Hilbert space when equipped with this inner product.
We get closed-form formula for these statistical Minkowski's distances between mixtures $m_1$ and $m_2$ of CEFs, as well as for the Cauchy-Schwarz divergence.
All those statistical distances can be computed in quadratic time in the number of mixture components.

Selecting a proper divergence from {\em a priori} first principles for a given application is a paramount but difficult task~\cite{Deza-2009}. 
Often one is left by checking experimentally the performances of a few candidate divergences in order to select the {\em a posteriori}
 `best' one.
We hope that these newly proposed statistical Minkowski's distances, $D_\alpha$ and scale-invariant $L_\alpha$, will prove experimentally useful in a number of applications ranging from computer vision to machine learning and signal processing. 

\vskip 0.3cm
Additional material is available from\\ 
\centerline{\url{https://franknielsen.github.io/MinkowskiStatDist/}}

\vskip 1cm
\noindent {Acknowledgments}: The author would like to thank Ga\"etan Hadjeres for his careful reading and feedback.

\bibliographystyle{plain}
\bibliography{MinkowskiInequalityGapDistanceBIB}

\end{document}